\newtheorem{theorem}{Theorem}[section]
\newcommand{\C}{\mathscr{C}}
\newcommand{\K}{\mathscr{K}}
\newcommand{\M}{\mathscr{M}}
\title{Euclidean volumes of hyperbolic knots}
\author{Nikolay Abrosimov} 
\address{Novosibirsk State University, ul. Pirogova 2, 630090 Novosibirsk, Russia}
\address{Sobolev Institute of Mathematics, pr. Akademika Koptyuga 4, 630090 Novosibirsk, Russia}
\email[]{abrosimov@math.nsc.ru}
\author{Alexander Kolpakov}
\address{Institut de Math\'ematiques, Universit\'e de Neuch\^atel, Rue Emile-Argand 11, 2000 Neuch\^atel, Suisse / Switzerland}
\email[]{kolpakov.alexander@gmail.com}
\author{Alexander Mednykh}
\address{Novosibirsk State University, ul. Pirogova 2, 630090 Novosibirsk, Russia}
\address{Sobolev Institute of Mathematics, pr. Akademika Koptyuga 4, 630090 Novosibirsk, Russia}
\email[]{smedn@mail.ru}
\date{}
\begin{document}

\begin{abstract}
The hyperbolic structure on a $3$--dimensional cone--manifold with a knot as singularity can often be deformed into a limiting Euclidean structure. In the present paper we show that the respective normalised Euclidean volume is always an algebraic number. This stands in contrast to hyperbolic volumes whose number--theoretic nature is usually quite complicated. 
\end{abstract}

\subjclass[2010]{57K10, 57M50, 11R04}

\maketitle

\section{Introduction}

The $A$--polynomial of a knot was introduced in \cite{CCGLS} and has become a powerful knot invariant. It encodes not only topological but also geometric information about the knot complement, especially in the case of hyperbolic knots. The notion of $A$--polynomial can be generalised to the case of hyperbolic manifolds with a single cusp \cite{C, CCGLS}. 

The other concept that we deal with, namely degeneration and regeneration of hyperbolic cone-manifold structures, has been studied in many works over the years. In this regard, we refer to the results of Boileau, Leeb, and Porti \cite{BLP-geometrisation, BP-app, Porti-1998}. For some other significant contributions to the problem we also refer to \cite{MaMont, Mont, Weiss-local, Weiss-global}.   

Let $\K$ be a knot (in $\mathbb{S}^3$ or any other closed orientable $3$--manifold $\M$), and let $\C_\alpha = \C_\alpha(\M, \K)$ be the corresponding cone manifold with underlying topological space $\M$ and cone angle $\alpha$ along a singular geodesic in $\M$ isotopic to $\K$. 

We shall assume that $\K$ is a hyperbolic knot (i.e. $\C_0 = \C_0(\M, \K)=\M\setminus\K$ has a complete hyperbolic metric of finite volume) and that a hyperbolic structure on the cone--manifold $\C_\alpha$ exists for any $\alpha \in (\alpha_0 - \varepsilon, \alpha_0)$, where $\alpha_0>\varepsilon>0$, and degenerates (up to rescaling) into a Euclidean structure as $\alpha \rightarrow \alpha_0$.  

On the other hand, given a Euclidean cone manifold $\C_{\alpha_0}$, a hyperbolic or spherical structure can often be ``regenerated'': namely, it will be hyperbolic for $\alpha \in (\alpha_0 - \varepsilon, \alpha_0)$ and spherical for $\alpha \in (\alpha_0, \alpha_0 + \varepsilon)$. Such cone--manifolds exist under some weak cohomological assumptions \cite{Porti-1998}. Also, if the cone angle satisfies $\alpha_0 \leq \pi$ then more stronger results can be established \cite{Porti-Weiss}. 

The hyperbolic volume of $\C_\alpha$ is an important quantity: due to the Mostow--Prasad--Kojima rigidity, the volume of $\C_0$ is a topological invariant whenever it admits a complete hyperbolic metric (of finite volume) \cite{Kojima}. There is also a large number of results concerning rigidity of cone--manifolds in the hyperbolic and other geometries \cite{Porti-Weiss}. 

Since $\C_\alpha$ converges in the Gromov--Hausdorff sense (after an appropriate rescaling) to a Euclidean cone--manifold $\C_{\alpha_0}$, one can define the associated \textit{normalised Euclidean volume} as
$$\mathrm{vol}\,\C_{\alpha_0} = \lim_{\alpha \to \alpha_0} \frac{\mathrm{Vol}\, \C_\alpha}{\ell^3_\alpha}, $$
where $\ell_\alpha = \ell_\alpha(\M, \K)$ is the length of the singular geodesic of $\C_\alpha = \C_\alpha(\M, \K)$.
From here on ``$\mathrm{vol}$'' denotes the normalised volume in contrast to ``$\mathrm{Vol}$'' that refers to the standard one. See \cite[Proposition 4.1]{Porti-1998} (cf. also \cite[\S 8.3--8.4]{Porti-1998}) for more details.

The number--theoretic nature of hyperbolic volume is usually highly intricate \cite{Zagier}. Surprisingly, the normalised Euclidean volume is always \textit{an algebraic number} whose minimal polynomial, in many cases, can be efficiently calculated. The proof of this fact, that has been apparently overlooked before, is the main result of the present paper. 

In the proof we use a modified version of the \textit{A--polynomial}, while the standard one was introduced by Cooper, Culler, Gillet, Long, and Shalen in \cite{CCGLS}. Our version contributes the real length of the singular geodesic instead of the complex one, and will be called the \textit{Riley polynomial} throughout the paper. It appears very suitable for computational purposes. We also provide a pseudocode that computes the minimal polynomial of the normalised volume $\mathrm{vol}\,\C_{\alpha_0}$. This code can be used in any computer algebra system capable of computing resultants and factorising multivariable polynomials, such as SageMath \cite{SageMath} or Mathematica \cite{Mathematica}.  

The main result of the present paper is related to the celebrated theorem by Sabitov that settles the Bellows Conjecture by showing that the volume of a Euclidean polyhedron $P$ is a root of an algebraic equation whose coefficients depend on the edge lengths and combinatorial type of~$P$ (\cite{Sabitov}, see also \cite{Sabitov-UMN, JMS} for a detailed exposition). A higher--dimensional version of Sabitov's theorem and other generalisations were obtained by Gaifullin \cite{Gaifullin1, Gaifullin2}.

\medskip
\noindent
\textbf{Acknowledgements.} The authors would like to thank Louis H. Kauffman for motivating discussions. 

\medskip
\noindent
\textbf{Funding.} N.A. and A.M. were supported by Russian Federation Government (grant 075--15--2019--1613). A.K. was supported by Swiss National Science Foundation (project PP00P2--170560).

\section{Euclidean volumes and algebraic numbers}

Below we shall speak about cone--manifolds $\C_\alpha(\M, \K)$ whose singular set is a knot $\K$ and underlying space is a closed orientable $3$--manifold $\M$, which in concrete instances is often the three--sphere $\mathbb{S}^3$. 

\begin{theorem}
Let $\C_\alpha = \C_\alpha(\M, \K)$ be a cone--manifold with underlying $3$--manifold $\M$ and singular set a knot $\K$ with cone angle $\alpha$. Assume that $\C_\alpha$ has a hyperbolic structure for $\alpha \in (\alpha_0 - \varepsilon, \alpha_0)$ that degenerates into a Euclidean structure as $\alpha \to \alpha_0$. Then the normalised Euclidean volume of $\C_{\alpha_0}$ is an algebraic number. Moreover, its minimal polynomial can be computed from the $A$--polynomial of $(\M,\K)$. 
\end{theorem}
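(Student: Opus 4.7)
\medskip
\noindent\textit{Proof proposal.} My plan is to extract the leading asymptotic behaviour of the singular geodesic length $\ell_\alpha$ from the $A$-polynomial (equivalently, the Riley polynomial) and feed it into the Schl\"afli formula, thereby expressing the normalised Euclidean volume as a rational function of an algebraic number.

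First I would invoke the Schl\"afli formula for hyperbolic cone-$3$-manifolds,
$$\frac{d\,\mathrm{Vol}\,\C_\alpha}{d\alpha} \;=\; -\frac{\ell_\alpha}{2}.$$
Because the hyperbolic cone-manifold collapses as $\alpha \to \alpha_0$, the hyperbolic volume vanishes in the limit, so integration yields $\mathrm{Vol}\,\C_\alpha = \tfrac{1}{2}\int_\alpha^{\alpha_0} \ell_t\, dt$. The Riley polynomial then furnishes an algebraic equation $P(\ell_\alpha,\alpha) = 0$ along the curve of hyperbolic cone-manifold structures, and the degeneration hypothesis pins a marked point $(\ell,\alpha) = (0,\alpha_0)$ on the corresponding algebraic curve.

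Next I would expand $\ell_\alpha$ as a Puiseux series in $\tau := \alpha_0 - \alpha$ about that marked point. The generic leading term, forced by the shape of $P$ near $(0,\alpha_0)$, is
$$\ell_\alpha \;=\; \sqrt{a}\,\tau^{1/2} + O(\tau),$$
where $a$ is an algebraic number read off from the first non-vanishing partial derivatives of $P$ at $(0,\alpha_0)$. Substituting into the Schl\"afli integral gives $\mathrm{Vol}\,\C_\alpha = \tfrac{\sqrt{a}}{3}\tau^{3/2}(1+o(1))$, while $\ell_\alpha^{3} = a^{3/2}\tau^{3/2}(1+o(1))$, so the limit defining the normalised volume collapses to
$$\mathrm{vol}\,\C_{\alpha_0} \;=\; \lim_{\alpha\to\alpha_0}\frac{\mathrm{Vol}\,\C_\alpha}{\ell_\alpha^3} \;=\; \frac{1}{3a},$$
which is algebraic.

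To produce the minimal polynomial explicitly, I would set up a small polynomial system: the defining equation $P(\ell,\alpha)=0$, the constraint $3av = 1$ linking the normalised volume $v$ to the leading coefficient $a$, and the relations among partial derivatives of $P$ at $(0,\alpha_0)$ that pin down $a$. Iterated resultants (the operation implemented in the pseudocode promised in the introduction) eliminate the auxiliary variables $\ell$, $\alpha$, $a$, and the irreducible factor containing $v$ in the surviving polynomial is the minimal polynomial of $\mathrm{vol}\,\C_{\alpha_0}$.

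The main obstacle, I expect, is the Puiseux step. One must select the correct branch of the $A$-polynomial variety along which the hyperbolic family degenerates, treat non-generic cases where the leading exponent differs from $1/2$ (so that the rescaling by $\ell_\alpha^3$ in the definition of the normalised volume still produces a finite, nonzero limit), and justify term-by-term integration of the expansion in a one-sided neighbourhood of $\alpha_0$. The algebraicity of $a$, and hence of $\mathrm{vol}\,\C_{\alpha_0}$, rests on these local computations at the marked point on the Riley curve.
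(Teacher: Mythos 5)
Your overall strategy coincides with the paper's: both arguments combine the Schl\"afli formula with an algebraic relation between the cone angle and the singular-geodesic length to conclude that the normalised volume equals $1/(3a)$, where $a$ is the leading coefficient of $\ell_\alpha^2$ as a function of $\alpha_0-\alpha$. Your integration-plus-Puiseux bookkeeping is a repackaging of the paper's L'H\^opital-plus-implicit-differentiation computation (the paper writes $\mathrm{vol}\,\C_{\alpha_0}=-\tfrac13\lim (\ell_\alpha^2)'^{-1}_\alpha$, which is the same $1/(3a)$), and the concluding iterated-resultant elimination is the same.

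There is, however, one genuine gap: you treat the $A$--polynomial and the Riley polynomial as interchangeable, but the $A$--polynomial relates $M=\exp(i\alpha/2)$ to $L=\exp(\gamma/2)$ where $\gamma=\ell+i\varphi$ is the \emph{complex} length, so it does not by itself furnish a relation $P(\ell_\alpha,\alpha)=0$ in the \emph{real} length. The paper's essential construction is to form the conjugate polynomial $\widehat{A}(M,\overline{L})=M^{\deg_M A}A(M^{-1},\overline{L})$ (legitimate because $\overline{M}=M^{-1}$ on the unit circle) and then eliminate $L$ and $\overline{L}$ against $W-L\overline{L}$ by consecutive resultants, so that the surviving variable $W=L\overline{L}=\exp(\ell)$ carries the real length; only after this step does the degeneration become the algebraic condition $R(M_0,1)=0$, making $M_0$ algebraic, and only then is your coefficient $a$ visibly algebraic. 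Without it, your marked point $(0,\alpha_0)$ does not lie on a curve defined over $\mathbb{Q}$. A smaller related point: the natural polynomial variables are $\exp(i\alpha/2)$ and $\exp(\ell)$ rather than $\alpha$ and $\ell$, so the ``partial derivatives of $P$ at $(0,\alpha_0)$'' from which you read off $a$ must be converted by the chain rule; this is routine (it contributes the factors $M_0$ and $i/2$ in the paper's closed form $\mathrm{vol}\,\C_{\alpha_0}=2i/(3M_0Z_0)$) but is needed for the algebraicity claim to close. Your remark about non-generic Puiseux exponents is a fair caveat; the paper handles it implicitly by appealing to Porti's result that the normalised limit exists and is finite.
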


\begin{proof}
The limit $\mathrm{vol}\,\C_{\alpha_0} = \lim_{\alpha \to \alpha_0} \frac{\mathrm{Vol}\, \C_\alpha}{\ell^3_\alpha}$ exists by \cite[Corollary C]{Porti-1998}. It remains to show that its value is among the roots of a polynomial with integer coefficients. 

For this purpose, we express $\mathrm{vol}\,\C_{\alpha_0}$ as a root of the associated Riley polynomial of $(\M, \K)$: this is a ``real'' version of the $A$--polynomial, as opposed to the original ``complex'' one. Let $A(M, L)$ be the $A$--polynomial of $(\M, \K)$ as defined in \cite{CCGLS} that corresponds to the $SL_2(\mathbb{C})$ representations of the fundamental group of $\pi_1(\M \setminus \K)$ (cf. also \cite{C}, where the $PSL_2(\mathbb{C})$ version of the $A$--polynomial is defined).

Following \cite{HLM4}, choose the canonical longitude--meridian pair $(l, m)$ in the fundamental group $\pi_1(\M\setminus\K)$ in such a way that $m$ is an oriented boundary of meridian disc of $\K$ and a longitude curve $l$ is null--homologous outside of $\K$. Let $h : \pi_1(\M\setminus\K) \rightarrow PSL_2(\mathbb{C})$ be the holonomy map of $\M\setminus\K$. Then (see \cite{GM}) $h$ admits two liftings to $SL_2(\mathbb{C})$. The image of $l$ in $SL_2(\mathbb{C})$ under any of these liftings is the same since $l$ is nullhomologous outside the singular set. Thus, up to conjugation in $SL_2(\mathbb{C})$,
\begin{align*}  
h(m)&=\pm\begin{bmatrix}
\exp(i\,\alpha/2) & 0   \\
0 & \exp(-i\,\alpha/2) 
\end{bmatrix},&
h(l)&= \begin{bmatrix}
\exp(\gamma/2) & 0 \\
0 & \exp(-\gamma/2) 
\end{bmatrix},
\end{align*}
where $\gamma = \ell +i \varphi$, $\ell$ is the length of $\K$, and $\varphi$, $- 2\pi \leq \varphi < 2\pi$, is the angle of the lifted holonomy of $\K$. For the sake of simplicity, we will refer to $\gamma = \ell +i \varphi$ as the complex length of the singular geodesic $\K$.

An important property of the $A$--polynomial is that the cone angle $\alpha$ and complex length $\gamma$ of $\K$ are related by the equation $A(L,M) = 0$, where $L=\exp(\gamma/2)$ and $M=\exp(i\,\alpha/2)$. See \cite{CCGLS} for more details.

Let $A(\overline{M}, \overline{L}) = \overline{A(M, L)}$ be the complex conjugate of the initial $A$--polynomial whose coefficients are always integers. Consider \begin{equation*}
    \widehat{A}(M, \overline{L}) = M^{\deg_M A(M,L)} A(M^{-1}, \overline{L}).
\end{equation*}
Once $M=\exp(i\,\alpha/2)$, we have $\overline{M}=M^{-1}$ and $A(\overline{M},\overline{L})=A(M^{-1},\overline{L})$. Also, if $L = \exp(\frac{\ell + i\,\varphi}{2})$ then $\overline{L} = \exp(\frac{\ell - i \,\varphi}{2})$ and the quantity $W = L \overline{L} = \exp( \ell )$ is associated with the real length $\ell$ of the knot $\K$. 

We need to obtain the Riley polynomial that relates the variables $M = \exp(i\,\alpha/2)$ and $W = L \overline{L} = \exp(\ell)$. In order to do this, we consider $L$, $\overline{L}$ and $M$ as independent variables.

Let us compute the consecutive resultants $R_1 = Res_{L}(A(M, L), \widehat{A}(M, \overline{L}))$ and $R_2 = Res_{\overline{L}}(R_1, W - L \overline{L})$, see \cite{Khovanskii-Monin} for basic theory of resultants as applied to Laurent polynomials. As a result, we obtain $R(M, W)$ as a factor of $R_2$ that corresponds to the hyperbolic structure on $\C_{0}$. This factor corresponds to the so-called ``excellent component'' of the character variety of $(\M, \K)$. Note that by construction $R(M, W)$ is a two--variable polynomial with integer coefficients.

The key property of the Riley polynomial is the identity $R(M, W)=0$, whenever $M=\exp(i\,\alpha/2)$ and $W = L \overline{L} = \exp(\ell)$.

As $\alpha \to \alpha_0$, we have that $\ell_\alpha \to 0$ \cite[Corollary C]{Porti-1998}. Thus $M_0 = \exp(i \alpha_0/2)$ is among the roots of $R(M_0, \exp(0)) = R(M_0, 1)$. In particular, $M_0$ is an algebraic number. 

Let us recall the Schl\"{a}fli formula \cite{Hodgson, Milnor,  Vinberg-Geometry-II} that in our case takes the following simple form:
\begin{equation}\label{Schlaefli}
    \mathrm{d} \mathrm{Vol}\, \C_\alpha(\M, \K) = - \frac{1}{2} \, \ell_\alpha \mathrm{d}\alpha.
\end{equation}

Let us observe that the expression for $\mathrm{vol}(\M, \K)$ can be rewritten by using the L'H\^{o}pital rule as follows
\begin{multline}\label{e(MK)}
    \mathrm{vol}\,\C_{\alpha_0} = \lim_{\alpha \to \alpha_0} \frac{\mathrm{Vol}\, \C_\alpha}{\ell^3_\alpha} = \lim_{\alpha \to \alpha_0} \frac{(\mathrm{Vol}\, \C_\alpha)'_\alpha}{(\ell^3_\alpha)'_\alpha} = \lim_{\alpha \to \alpha_0} \frac{-\frac{1}{2} \ell_\alpha}{3 \ell^2_\alpha (\ell_\alpha)'_\alpha} = -\frac{1}{3} \lim_{\alpha \to \alpha_0} \frac{1}{(\ell^2_\alpha)'_\alpha},
\end{multline}
where we use the Schl\"{a}fli formula \eqref{Schlaefli} in order to differentiate $\mathrm{Vol}\,\C_{\alpha}$. Here and below, we shall use $f'_x$ as a shortcut for $\frac{\mathrm{d} f}{\mathrm{d} x}$, for any expression $f$ that depends on a variable $x$ explicilty or implicitly. 

Moreover, as $\alpha\rightarrow\alpha_0$ we have that $\ell=\ell_\alpha \to 0$ and $W = \exp(\ell)\to 1$. Thus we can introduce a new variable $X$ and write $W = 1 + X$, where $X \to 0$ as $\alpha\rightarrow\alpha_0$. Then $\ell = \ln(1 + X)$, $\ell^2 = \ln^2(1 + X)=X^2-X^3+O(X^4)$ as $X\rightarrow 0$. Hence $(\ell^2)'_\alpha = (X^2)'_\alpha + O(X^2)$. By using \cite[Corollary C]{Porti-1998}, we can replace $O(X^2)$ with $O(|\alpha - \alpha_0|)$, as $\alpha \to \alpha_0$. The resulting asymptotic expansion $(\ell^2)'_\alpha = (X^2)'_\alpha + O(|\alpha - \alpha_0|)$, as $\alpha \to \alpha_0$, allows us to use only polynomial expressions in the rest of the proof.

By computing the resultant of $R(M, W)$ with $(W - 1 - X)$ in $W$ first, and then computing one more resultant of the obtained expression with $(Y - X^2)$ in $X$, we find the minimal polynomial $P(M, Y)$ for $Y$ over the ring $\mathbb{Q}[M]$. 

Now let us consider $Y$ as an implicit function $Y = Y(M)$ defined by the equation $P(M, Y(M))=0$ together with the condition $Y(M_0) = 0$, for $M_0 = \exp(i \alpha_0/2)$. This allows us to compute the derivative $Y'(M)$ in terms of $P(M, Y)$ and $Y(M)$ itself. Since $P'_M(M, Y(M)) + P'_Y(M, Y(M)) \, Y'(M) = 0$, let us put $Q(M, Y, Z) = P'_M(M, Y) + P'_Y(M, Y) \, Z$, where $Z = Y'(M)$ is a new variable.

By taking the resultant of $P(M, Y)$ and $Q(M, Y, Z)$ in $Y$, we finally obtain the minimal polynomial $S(M, Z)$ for $Z = (X^2)'_\alpha$ over the ring $\mathbb{Q}[M]$, after choosing the appropriate irreducible factor. 

By passing to the $\frac{d}{dM}$ derivative, we can rewrite \eqref{e(MK)} simply as 
\begin{equation}
    \mathrm{vol}\,\C_{\alpha_0} = \frac{2 i}{3 M_0 Z_0},
\end{equation}
where $Z_0 = \lim_{\alpha \to \alpha_0} Y'(M)$, for $M=\exp(i \alpha/2)$, is among the roots of the polynomial $S(M_0, Z)$, for $M_0=\exp(i \alpha_0/2)$. 

As $M_0$ is an algebraic number and $S(M, Z)$ is a polynomial with integer coefficients, we conclude that $Z_0$ is algebraic. Hence, $\mathrm{vol}\,\C_{\alpha_0}$ is also algebraic. 
\end{proof}

\section{Computing the minimal polynomial for normalised volume}\label{section:algorithm}

In this section we provide a pseudocode that computes the minimal polynomial of $\mathrm{vol}(\M, \K)$ starting from the $SL_2(\mathbb{C})$ $A$--polynomial of $(\M, \K)$ as input. This algorithm can be used in any computer algebra system that has enough functionality in commutative algebra, such as SageMath \cite{SageMath} or Mathematica \cite{Mathematica}. 

\medskip

\begin{algorithm}[H]
 \KwData{$A(M, L)$ = the $A$--polynomial of $(\M, \K)$.}
 \KwResult{The minimal polynomial of $\mathrm{vol}\,\C_{\alpha_0}$.}
\end{algorithm}
\newcounter{No}
\begin{list}{\arabic{No}.}{\usecounter{No} \itemindent=0pt \rightmargin=0em}
    \item Let $d = \mathrm{deg}_M A(M, L)$;\; 
    \item Let $\overline{L}$ be a new variable. Let $R_1, R_2, R_3$ be two auxiliary variable;\; 
    \item Let $W$ be a new variable;\;
    \item Let $\widehat{A}(M, L) := M^d \cdot A(M^{-1}, \overline{L})$;\;
    \item Let $R_1$ be the resultant of $A(M, L)$ and $\widehat{A}(M, L)$ in $L$;\; 
    \item Let $R_2$ be the resultant of $R_1$ and $W - L \cdot \overline{L}$ in $\overline{L}$;\; 
    \item Factorise $R_2$ and isolate its irreducible factor $R(M, W)$ that corresponds to the excellent component of the character variety of $(\M, \K)$;\; 
    \item Let $R_1$ be the resultant of $R(M, W)$ and $W - X - 1$ in $W$;\;
    \item Let $R_2$ be the resultant of $R_1$ and $Y - X^2$ in $X$;\;
    \item Factorise $R_2$ and isolate its irreducible factor that corresponds to the minimal polynomial of $Y$ over $\mathbb{Q}(M)$;\;
    \item Set $Y = Y(M)$ to be a function of $M$. Let $Y' = Y'(M)$ be the derivative of $Y(M)$ with respect to $M$;\;
    \item Differentiate $P(M, Y(M))$ with respect to $M$. Store the output as $R_1$;\;
    \item Substitute $Y'(M)$ in $R_1$ by a new variable $Z$. Store the output as $Q(M, Y, Z)$;\;
    \item Let $R_2$ be the resultant of $P(M, Y)$ and $Q(M, Y, Z)$ in $Y$;\;
    \item Factorise $R_2$ and isolate its irreducible factor $S(M, Z)$ that corresponds to the minimal polynomial of $Z$ over $\mathbb{Q}(M)$;\;
    \item Let $V$ be a new variable. Let $I$ be the complex unit and let $R_1$ be $3 \cdot M \cdot Z \cdot V - 2 \cdot I$. Let $R_2$ be $R(M, 1)$;\;
    \item Let $R_3$ be the resultant of $R_1$ and $S(M, Z)$ in $Z$. Let $R_4$ be the resultant of $R_2$ and $R_3$ in $M$;\;
    \item Factorise $R_4$ and isolate its irreducible factor $F(V)$ that corresponds to the minimal polynomial of $V$ over $\mathbb{Q}$;\;
    \item Output $F(V)$.
\end{list}

\medskip

In the above algorithm, Step 7 is more involved since it needs the understanding of complete hyperbolic structure on $(\M, \K)$ in order to determine the excellent component. For the case of knots in $\mathbb{S}^3$ the excellent component is unique up to conjugation and birational isomorphism \cite{Hilden-et-al}. In other cases, it can often be determined from geometric considerations and computations using Snappy \cite{Snappea}. 

Another task that does not seem amenable to a universal approach is determining the numerical value of $\mathrm{vol}\,\C_{\alpha_0}$ in order to isolate the correct irreducible factor in Step 18. The necessary computation can be achieved by using a known volume formula (cf. \cite{Mednykh, Mednykh-Manilla} for a few suitable ones), or by using Snappy \cite{Snappea} with enough numerical precision.

\section{Two--bridge knot case}

Let $\K$ be a hyperbolic two--bridge knot in $\mathbb{S}^3$ and $\C_\alpha= \C_\alpha(\mathbb{S}^3, \K)$. Then according to \cite{Porti-two-bridge} there exists an angle $\alpha_0 \in [2\pi/3, \pi)$ such that $\C_\alpha$ is hyperbolic for all $\alpha \in [0, \alpha_0)$, Euclidean for $\alpha = \alpha_0$, and spherical for all $\alpha \in (\alpha_0, 2\pi-\alpha_0)$. 

Thus we can define the (normalised) Euclidean volume $\mathrm{vol}(\K)=\mathrm{vol}(\mathbb{S}^3, \K)$ of $\K$. Because of Weiss' rigidity theorem \cite{Weiss-local, Weiss-global}, $\mathrm{vol}(\K)$ will be also a topological invariant of $\K$ together with its hyperbolic volume.

\section{A gallery of examples}

Below we explicitly compute several Euclidean volumes: some for hyperbolic $2$--bridge knots (the figure--eight knot $4_1$ and the 3--twist knot $5_2$) in $\mathbb{S}^3$, and some for their ``sister'' manifolds. The complexity of minimal polynomial for $\mathrm{vol}\,\C_{\alpha_0}$ seems to grow fast when the topological complexity of $(\M, \K)$ increases. The Euclidean volumes for two--bridge knots with less than $8$ crossings were computed in \cite{Mednykh}.

Let $f_1(x_1, x_2, \ldots, x_n), \ldots, f_k(x_1, x_2, \ldots, x_n)$ be a set of polynomials over $\mathbb{Z}$. Let $Res_{x_{i_1}, \ldots, x_{i_{k-1}}}(f_1, \ldots, f_k)$ denote the ``multiple resultant'' inductively defined by 
\begin{equation*}
    Res_{x_{i_1}, \ldots, x_{i_{k-1}}}(f_1, \ldots, f_k) := Res_{x_{i_1}, \ldots, x_{i_{k-2}}}(f_1, \ldots, Res_{x_{k-1}}(f_{k-1}, f_k)).
\end{equation*}
We shall use this notation as a shorthand whenever we need to take several resultants in one scoop. Following \cite{Khovanskii-Monin} it can be extended to Laurent polynomials.

\subsection{Knot $4_1$: the figure--eight}


The $A$--polynomial of $\K = 4_1$ is given in the KnotInfo census \cite{KnotInfo}
\begin{equation*}
    A(M, L) = M^4 + L (-M^8 + M^6 + 2 M^4 + M^2 - 1) + L^2 M^4.
\end{equation*}

The Riley polynomial $R(M,L)$ is a factor of the resultant of the $A$--polynomial, its conjugate $\overline{A}$ and $W = L \overline{L}$ with respect to the variables $L, \overline{L}$: 
\begin{multline*}
    R(M, L) = Res_{L,\overline{L}}(A(L, M), M^8 A(\overline{L}, M^{-1}), W - L\overline{L}) = -M^8 + (1 - 2 M^2 - 3 M^4 
    \\+ 2 M^6 + 6 M^8 + 2 M^{10} - 3 M^{12} - 2 M^{14} + M^{16}) W - M^8 W^2.
\end{multline*}

First we replace the variable $X$ with $W = 1 + X$, and then once again pass to the new variable $Y$ such that $X = Y^2$ by using resultants. Thus, we obtain
\begin{multline*}
    P(M, Y) = 1 - 4 M^2 - 2 M^4 + 16 M^6 + 9 M^8 - 24 M^{10} - 34 M^{12} + 12 M^{14} + 52 M^{16} \\ + 12 M^{18} - 34 M^{20} - 24 M^{22} 
    + 9 M^{24} + 16 M^{26} - 2 M^{28} - 4 M^{30} + M^{32} 
    \\ + (-1 + 4 M^2 + 2 M^4 - 16 M^6 - 11 M^8 + 28 M^{10} + 40 M^{12} - 16 M^{14} - 60 M^{16} - 16 M^{18} 
    \\ + 40 M^{20} + 28 M^{22} - 11 M^{24} - 16 M^{26} + 2 M^{28} + 
    4 M^{30} - M^{32}) Y + M^{16} Y^2.
\end{multline*}

Taking into account the identity $P'_M(M, Y(M)) + P'_Y(M, Y(M)) \, Y'(M) = 0$, we compute the minimal polynomial $Q(M, Y, Z)$ of $Z = Y'(M)$ over $\mathbb{Z}[Y, M]$ which satisfies $Q(M, Y, Y'(M)) = 0$. This computation yields
\begin{multline*}
    Q(M, Y, Z) = 
    -8 M - 8 M^3 + 96 M^5 + 72 M^7 - 240 M^9 - 408 M^{11} + 168 M^{13} 
    \\+ 832 M^{15} + 216 M^{17} - 680 M^{19} - 528 M^{21} + 216 M^{23} 
    + 416 M^{25} - 56 M^{27} - 120 M^{29} 
    \\+ 32 M^{31} + (8 M + 8 M^3 - 96 M^5 - 88 M^7 + 280 M^9 + 480 M^{11} - 
    224 M^{13} - 960 M^{15} 
    \\- 288 M^{17} + 800 M^{19} + 616 M^{21} 
    - 264 M^{23} - 416 M^{25} + 56 M^{27} + 120 M^{29} - 32 M^{31}) Y 
    \\+ 16 M^{15} Y^2 + (-1 + 4 M^2 + 2 M^4 - 16 M^6 - 11 M^8 + 28 M^{10} + 
    40 M^{12} - 16 M^{14} - 60 M^{16} 
    \\- 16 M^{18} + 40 M^{20} 
    + 28 M^{22} - 11 M^{24} - 16 M^{26} + 2 M^{28} + 4 M^{30} - M^{32}) Z + 2 M^{16} Y Z.
\end{multline*}

The resultant $Res_Y(P(M, Y),\, Q(M, Y, Z))$ has a unique irreducible factor $S(M, Z)$ that defines the value of $Z$ for any fixed $M$. We obtain
\begin{multline*}
    S(M, Z) = -256 M^7 + 256 M^9 - 64 M^{11} - 256 M^{13} + 640 M^{15} - 256 M^{17} - 64 M^{19} 
    \\+ 256 M^{21} - 256 M^{23} + (-16 + 56 M^2 + 24 M^4 - 160 M^6 - 88 M^8 + 168 M^{10} + 160 M^{12} 
    \\- 32 M^{14} + 32 M^{18} - 160 M^{20} - 168 M^{22} + 88 M^{24} + 
    160 M^{26} - 24 M^{28} - 56 M^{30} 
    \\+ 16 M^{32}) Z - M^{17} Z^2.
\end{multline*}

By solving $R(M, 1) = 0$ we obtain $M_0 = \frac{1}{2} + i\,\frac{\sqrt{3}}{2}$, while $S(M_0, Z)$ has $Z_0 = 36 + 12 i \sqrt{3}$ among its roots. Then we obtain the known value \cite{Mednykh, MR}
\begin{equation*}
    \mathrm{vol}(4_1) = \frac{2 i}{3 M_0 Z_0} = \frac{\sqrt{3}}{108}.
\end{equation*}

\subsection{The figure--eight sister: manifold $\mathsf{m003}$}

The $A$--polynomial of a hyperbolic knot $\K$ in a manifold $\M$ different from $\mathbb{S}^3$ can also be defined by using the fact that $\M$ is a complete hyperbolic manifold with a single cusp and by considering its peripheral curve system \cite{C}. In this case, the $A$--polynomial is computed by using the Neumann--Zagier gluing equations: given a set of equations one can reduce them to a single polynomial by excluding the so-called ``shape'' or ``cross--ratio'' parameters. For more details see \cite[\S 2 -- \S 3]{C}, and also \cite[\S 6]{C} for some examples.

Let $\M$ be manifold $\mathsf{m003}$ in the Snappea census \cite{Snappea}, that is the sister manifold of the figure eight knot complement. By using the approach outlined above we obtain
\begin{equation*}
     A(M, L) = M^3 + L^2 (1 - M - 2 M^2 - M^3 + M^4) + L^4 M.
\end{equation*}

Let $\C_\alpha = \C_\alpha(\M, \K)$ be the cone--manifold obtained from $\M$ by the generalised Dehn surgery with parameters $(p, q) = (2\pi/\alpha, 0)$ that produces a cone angle of $\alpha$ along a singular curve that is isotopic to a knot $\K$ in $\M$ (see \cite{NZ}).

Let us note that the cone--manifold $\C_{\alpha}$ is a branched cover of the cone--manifold $6^2_2(\pi, \alpha/2)$ where the branching index is $2$ over \textit{both} singular components of the latter. In contrast, the figure eight cone--manifold $4_1(\alpha)$ is obtained as a double cover of $6^2_2(\pi, \alpha)$ branched over \textit{only} the $\pi$--component. 

The cone--manifold $6^2_2(\alpha, \beta)$, with $0 \leq \alpha, \beta \leq 4\pi/3$, is known to have Euclidean structure whenever $\cos(\alpha/2) + \cos(\beta/2) = 1/2$ and hyperbolic structure for $\cos(\alpha/2) + \cos(\beta/2) > 1/2$ \cite[Proposition 1.3.1]{Mednykh-Manilla}. Thus $\C_{4 \pi/3}$ is a Euclidean cone--manifold, while $\C_\alpha$ is hyperbolic for 
$0 \leq \alpha < 4\pi/3$.

By following the same computational procedure as for the figure eight knot, we obtain
\begin{equation*}
    \mathrm{vol}(\mathsf{m003}) = \frac{2\sqrt{3}}{27}.
\end{equation*}

It is worthy mentioning how $\mathsf{m003}$ can be constructed. First of all, there are exactly two complete hyperbolic manifolds arising from the face identifications of two ideal regular tetrahedra: the figure eight knot complement $4_1$ (numbered $\mathsf{m004}$ in the Snappea census \cite{Snappea}) and its sister $\mathsf{m003}$. Another way to see the relationship between $\mathsf{m003}$ and $\mathsf{m004}$ is by considering the smallest branched covers of the orbifold $6^2_2(\pi, 0)$. It has exactly two degree $2$ manifold covers: $\mathsf{m003}$ (doubly branched over both the cusp and orbifold components) and $\mathsf{m004}$ (which branches only over the orbifold component).

\begin{figure}[t]
    \centering
    \includegraphics[scale=0.20]{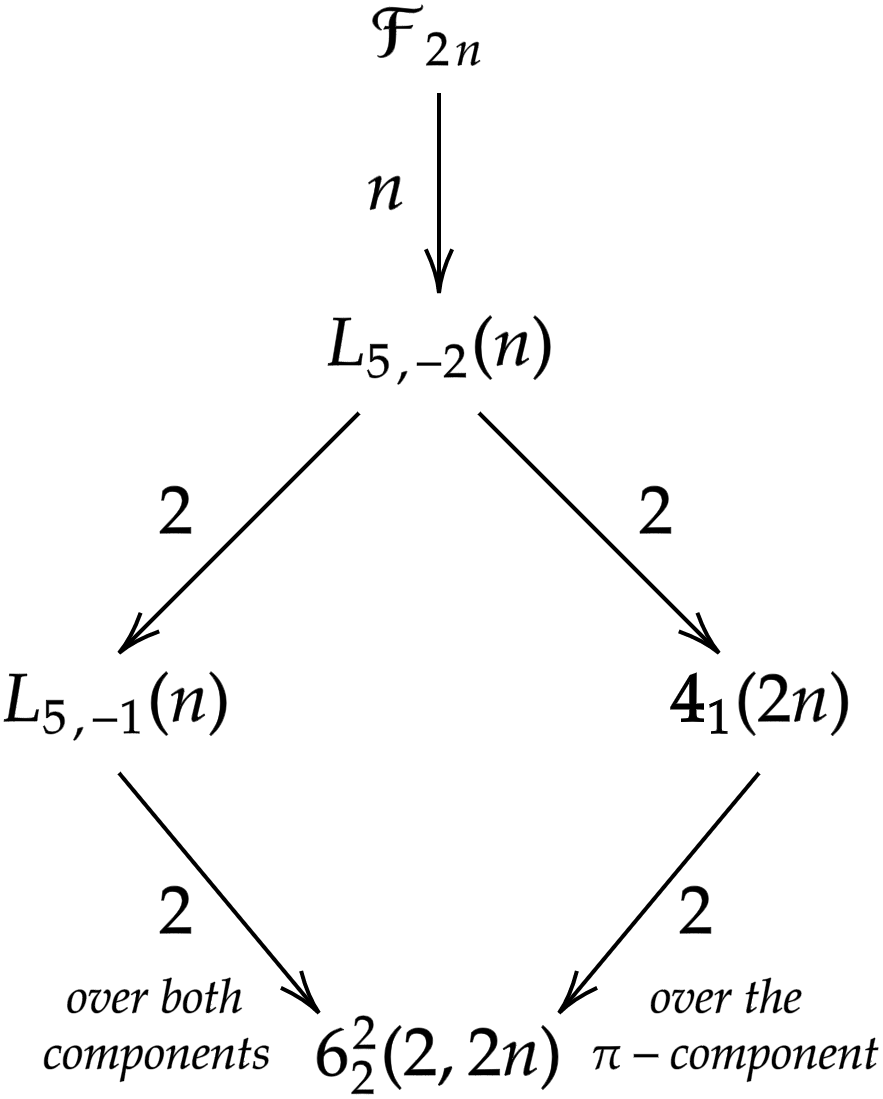}
    \caption{Diagram of orbifold covers for the Fibonacci manifold}
    \label{fig:fibonacci}
\end{figure}

Let us consider the orbifold $L_{p,q}(n)$ obtained by performing $(p, q)$--surgery on one component of the Hopf link, and $(n, 0)$--surgery on the other component. The underlying space of $L_{p,q}(n)$ is the lens space $L_{p,q}$. Let $4_1(n)$ be the figure eight knot orbifold with cone angle $2\pi/n$. Then we obtain the covering diagram in Figure~\ref{fig:fibonacci} (also see \cite{fibonacci2}). 

In the diagram, $\mathcal{F}_{2n}$ denotes the Fibonacci manifold with index $2n$ whose fundamental group has $2n$ generators as defined in \cite{fibonacci} (cf. also \cite{fibonacci3, fibonacci2}). The fundamental groups of such manifolds are the Fibonacci groups defined by Conway \cite{Conway}. These groups have the following presentation: 
\begin{equation*}
    \pi_1(\mathcal{F}_{2n}) \cong \langle x_0, x_2, \ldots, x_{2n-1} \,|\, x_i x_{i+1} = x_{i+2}, \; i\;\mathrm{mod}\;2n \rangle.
\end{equation*}

The limiting case $n \to \infty$ of the covering diagram in Figure~\ref{fig:fibonacci} gives the two manifold covers of $6^2_2(\pi, 0)$ mentioned above: $L_{5,-1}(\infty) = \mathsf{m003}$ and $4_1(\infty) = \mathsf{m004}$.

Despite $\mathsf{m003}$ and $\mathsf{m004}$ having equal hyperbolic volumes, the Euclidean cone--manifold deformations distinguish these sister manifolds. The same holds true for volumes of their hyperbolic Dehn fillings.

\subsection{Knot $5_2$: the triple--twist knot}

The $A$--polynomial of $\K = 5_2$ can be found in the KnotInfo census \cite{KnotInfo}
\begin{multline*}
    A(M, L) = 1 + L (-1 + 2 M^2 + 2 M^4 - M^8 + M^{10}) \\
 + L^2 (M^4 - M^6 + 2 M^{10} + 2 M^{12} - M^{14}) + L^3 M^{14}.
\end{multline*}

By performing the algorithm described in Section~\ref{section:algorithm} we obtain that the normalised Euclidean volume has numerical value
\begin{equation*}
\mathrm{vol}(5_2) \approx 0.009909630999945638.
\end{equation*}
This number is algebraic with minimal polynomial
\begin{equation*}
    785065068490752\, x^8 + 412091172864\, x^6 + 64457856\, x^4 - 864\, x^2 - 1.
\end{equation*}
Earlier, the numerical value of  $\mathrm{vol}(5_2)$ was found in \cite{S}. The above minimal polynomial was firstly computed in \cite{Mednykh}.

\subsection{Sisters of $5_2$: the manifolds $\mathsf{m016}$ and $\mathsf{m017}$}

Similar to the case of figure--eight and its sister manifold, knot $5_2$ and manifold \textsf{m017} from the Snappea census \cite{Snappea} are related to the covering diagram in Figure~\ref{fig:cover-2}.

\begin{figure}[ht]
    \centering
    \includegraphics[scale=0.20]{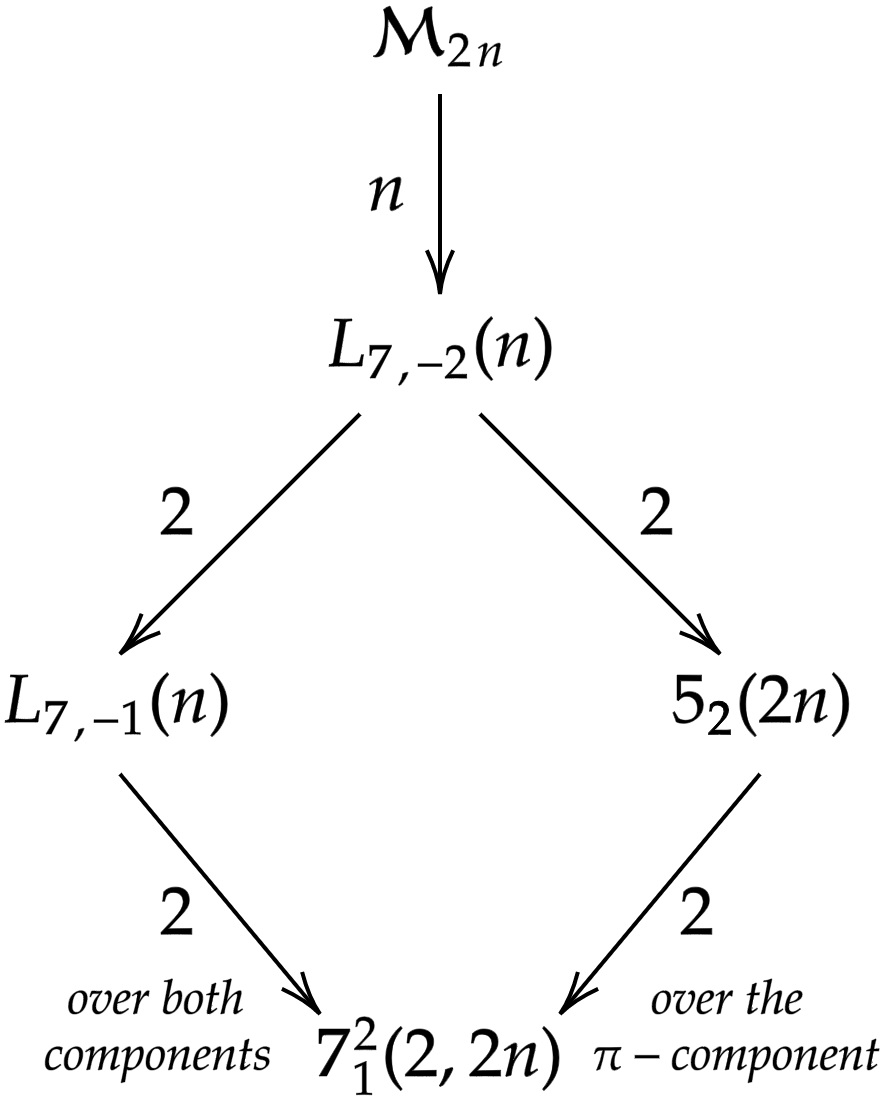}
    \caption{Diagram of orbifold covers for the Bandieri--Kim--Mulazzani manifold}
    \label{fig:cover-2}
\end{figure}

The diagram in Figure~\ref{fig:cover-2} uses the following notation: $5_2(n)$ is the orbifold obtained from by $(n,0)$--surgery on knot $5_2$, $7^2_1(2,n)$ is the orbifold with singular set link $7^2_1$ with orbifold singularities of angles $\pi$ and $2\pi/n$ on the respective components, and $L_{p,q}(n)$, as before, denotes the result of $(p,q)$--surgery on one component of the Hopf link and $(n,0)$--surgery on the other. The manifold $\mathcal{M}_{2n}$ is the $2n$--fold cyclic covering of $5_2(2n)$ described in \cite{BKM-5_2}. The fundamental group of $\mathcal{M}_{2n}$ has the following cyclic presentation with $2n$ generators:
\begin{equation*}
    \pi_1(\mathcal{M}_{2n}) \cong \langle x_0,\ldots x_{2n-1} \,|\, x_{i+1}^{-1}x_{i}x_{i+2}x_{i+1}^{-1}x_{i}x_{i+1}^{-1}x_{i+2}=1, \; i\;\mathrm{mod}\;2n \rangle.
\end{equation*}

The orbifold $7^2_1(\pi, 0)$ has two manifold covers: one is the knot $5_2$ complement, and the other is manifold \textsf{m017}. Thus we obtain sister manifolds again. However, in contrast to the figure--eight knot case, knot $5_2$ has one more sister: manifold \textsf{m016} in the Snappea census \cite{Snappea}. The manifolds \textsf{m016} and \textsf{m017} share the $A$--polynomial, and are distinguished by their Alexander polynomials. However, the geometric relation of \textsf{m017} to its sisters remains unknown to us.

Let $7^2_1(\pi, \alpha)$ be the cone--manifold with singularity $7^2_2$ link and underlying topological space $\mathbb{S}^3$, having cone angles $\pi$ and $\alpha$ along its respective components. Let $5_2(\alpha)$ be the cone--manifold with singularity $5_2$ knot and underlying topological space $\mathbb{S}^3$. Also, let $\C_\alpha$ denote the $(2\pi/\alpha, 0)$--surgery on $\mathsf{m017}$ manifold resulting in a cone--manifold with underlying space $\mathsf{m017}$ and cone angle $\alpha$.

Then $5_2(\alpha)$ is a degree $2$ cover of $7^2_1(\pi, \alpha)$ branched along its $\pi$--component, while $\C_\alpha$ doubly covers $7^2_1(\pi, \alpha/2)$. The latter covering is branched along both singular components.

The cone--manifold $7^2_1(\pi, \alpha_0)$ has Euclidean structure with cone angle 
\begin{equation*}
    \alpha_0 \approx 2.4071698135544546,
\end{equation*}
such that $M_0 = \exp(i \alpha_0/2)$ has the following minimal polynomial:
\begin{equation*}
    1 - 2 M_0 - M^2_0 + 8 M^3_0 - 11 M^4_0 + 8 M^5_0 - M^6_0 - 2 M^7_0 + M^8_0.
\end{equation*}

Then $5_2(\alpha_0)$ is a Euclidean cone--manifold with cone angle $\alpha_0$ along the knot $5_2$ in $\mathbb{S}^3$, and $\C_{2 \alpha_0}$ is a Euclidean cone--manifold with cone angle $2\alpha_0$ along a singular geodesic in $\mathsf{m017}$. The Euclidean cone--manifold structure on $5_2(\alpha_0)$ was previously studied in \cite{S} by the method developed in \cite{MR-Bielefeld}.

The $A$--polynomial of $\mathsf{m017}$ is computed by using the approach of \cite{C}:
\begin{multline*}
     A(M, L) = - M^5 + L^2 (M - M^2 + 2 M^4 + 2 M^5 - M^6) \\
     + L^4 (1 - 2 M - 2 M^2 + M^4 - M^5) + L^6 M.
\end{multline*}

The normalised Euclidean volume obtained from it equals
\begin{equation*}
    \mathrm{vol}(\mathsf{m017}) \approx 0.0792770479995651
\end{equation*}
and has minimal polynomial
\begin{equation*}
    191666276487\, x^8 + 6438924576\, x^6 + 64457856\, x^4 - 55296\, x^2 - 4096. 
\end{equation*}

Since $\mathsf{m016}$ and $\mathsf{m017}$ share the $A$--polynomial, it is natural to expect that $\mathsf{m016}$ admits a Euclidean structure and has the same normalised Euclidean volume as $\mathsf{m017}$.

\end{document}